\documentclass[reqno]{amsart}

\usepackage{amsmath,amssymb}
\usepackage{graphicx,subfigure}
\usepackage{tikz}

\usepackage{color}

%%%%%%%%%%%%%%%%%%%%%%%%%%%%%%%%%%%%%%%%
%%PDF colors for editing    			   %
% \usepackage{xcolor}	 			   %
% \pagecolor[rgb]{0,0,0}	         	   %
% \color[rgb]{1,1,1}	                   %
%%%%%%%%%%%%%%%%%%%%%%%%%%%%%%%%%%%%%%%%
%labels for writing    	 			   %
%\usepackage[notref,notcite]{showkeys}  %
%%%%%%%%%%%%%%%%%%%%%%%%%%%%%%%%%%%%%%%%

\newtheorem{theorem}{Theorem}[section]
\newtheorem{proposition}[theorem]{Proposition}
\newtheorem{corollary}[theorem]{Corollary}

\theoremstyle{definition}
 \newtheorem{definition}[theorem]{Definition}

\newtheorem*{definition*}{Definition}

\theoremstyle{remark}
\newtheorem{remark}[theorem]{Remark}

\numberwithin{equation}{section}

%Robert: he añadido estas dos commands

\newcommand{\pp}[2]{\frac{\partial#1}{\partial#2}}

% Greek letters (lowercase)

%
% Greek letters (uppercase)

%
% Boldface

%
% Star

%
% Tilde

%
% Sans serif

%
% Hat

%
% Blackboard bold

\def\RR{\mathbb{R}}

\renewcommand\SS{\mathbb{S}}
%

% Calligraphic

%
% Fraktur

%
% Bar

%
% Misc

\newcommand\minus\backslash

\newcommand\lan\langle
\newcommand\ran\rangle

%
% Log-like symbols

%\renewcommand{\limsup}{\operatornamewithlimits{\overline{lim}}}
%\renewcommand{\liminf}{\operatornamewithlimits{\underline{lim}}}
%\newcommand{\ker}{\operatorname{ker}}

\DeclareMathOperator\Div{div}

\renewcommand\leq\leqslant
\renewcommand\geq\geqslant
%
% Barred integrals (perhaps with \usepackage[intlimits]{amsmath}
\newlength{\intwidth}

%
% Subscripts

%
% Roman enumeration
\addtolength{\parskip}{3pt}

%

%    Blank box placeholder for figures (to avoid requiring any
%    particular graphics capabilities for printing this document).

\title[Turing universality of Euler and a conjecture of Moore]{Turing universality of the incompressible Euler equations and a conjecture of Moore}

\author{Robert Cardona}\address{ Robert Cardona,
Laboratory of Geometry and Dynamical Systems, Department of Mathematics, Universitat Polit\`{e}cnica de Catalunya and BGSMath Barcelona Graduate School of
Mathematics,  Avinguda del Doctor Mara\~{n}on 44-50, 08028 , Barcelona  \it{e-mail: robert.cardona@upc.edu }
 }
 \thanks{Robert Cardona acknowledges financial support from the Spanish Ministry of Economy and Competitiveness, through the Mar\'ia de Maeztu Programme for Units of Excellence in R\& D (MDM-2014-0445) via an FPI grant.}
\author{Eva Miranda}\address{ Eva Miranda,
Laboratory of Geometry and Dynamical Systems $\&$ Institut de Matemàtiques de la UPC-BarcelonaTech (IMTech),  Universitat Polit\`{e}cnica de Catalunya, Avinguda del Doctor Mara\~{n}on 44-50, 08028 , Barcelona \\ $\&$ CRM Centre de Recerca Matem\`{a}tica
\\ $\&$ IMCCE, CNRS-UMR8028, Observatoire de Paris, PSL University, Sorbonne
Universit\'{e} \it{e-mail: eva.miranda@upc.edu }
 }
\thanks{Robert Cardona and Eva Miranda are partially supported by the grants MTM2015-69135-P/FEDER and PID2019-103849GB-I00 / AEI / 10.13039/501100011033, and AGAUR grant 2017SGR932. Eva Miranda is supported by the Catalan Institution for Research and Advanced Studies via an ICREA Academia Prize 2016.}
\author{Daniel Peralta-Salas} \address{Daniel Peralta-Salas, Instituto de Ciencias Matem\'aticas-ICMAT, C/ Nicol\'{a}s Cabrera, nº 13-15 Campus de Cantoblanco, Universidad Aut\'{o}noma de Madrid,
28049 Madrid, Spain \it{e-mail: dperalta@icmat.es} }
\thanks{Daniel Peralta-Salas is supported by the grants MTM PID2019-106715GB-C21 (MICINN) and Europa Excelencia EUR2019-103821 (MCIU). This work was partially supported by the ICMAT--Severo Ochoa grant CEX2019-000904-S}
%\author{Francisco Presas} \address{Francisco Presas, Instituto de Ciencias Matem\'aticas-ICMAT, C/ Nicol\'{a}s Cabrera, nº 13-15 Campus de Cantoblanco, Universidad Aut\'{o}noma de Madrid,
%28049 Madrid, Spain \it{e-mail: fpresas@icmat.es} }
%\thanks{Francisco Presas is supported by the grant reference number MTM2016-79400-P (MINECO/FEDER). This work was partially supported by the ICMAT--Severo Ochoa grant CEX2019-000904-S}

\begin{document}
\begin{abstract}
In this article we construct a compact Riemannian manifold of high dimension on which the time dependent Euler equations are Turing complete. More precisely, the halting of any Turing machine with a given input is equivalent to a certain global solution of the Euler equations entering a certain open set in the space of divergence-free vector fields. In particular, this implies the undecidability of whether a solution to the Euler equations with an initial datum will reach a certain open set or not in the space of divergence-free fields. This result goes one step further in Tao's programme to study the blow-up problem for the Euler and Navier-Stokes equations using fluid computers. As a remarkable spin-off, our method of proof allows us to give a counterexample to a  conjecture of Moore dating back to 1998 on the non-existence of analytic maps on compact manifolds that are Turing complete.
\end{abstract}

\maketitle

\section{Introduction}\label{S:intro}

A Turing machine is a mathematical model of a theoretical device manipulating a set of symbols on a strip of tape with some specific rules.
Native to computer science, the notion of Turing completeness refers to a system that can simulate any Turing machine. The construction of dynamical systems (continuous or discrete) that are Turing complete is a classical problem that has received much attention in the last decades because of its deep connections with symbolic dynamics~\cite{Mo91,Mo,BC08}. Turing completeness has also been studied in several physical systems, from ray tracing problems in geometric optics~\cite{Reif} to quantum field theory~\cite{Free} or potential well dynamics~\cite{T1}.

Recently, Tao has proposed that the computational power of a Turing complete system could be used as a route to construct blow-up solutions to certain partial differential equations. Tao established such a Turing universality for nonlinear wave equations in~\cite{T1} and suggested that an analogous mechanism could be applied to the Navier-Stokes or the Euler equations in hydrodynamics~\cite{TNat} (to produce an initial datum that is programmed to evolve to a rescaled version of itself, as a Von Neumann self-replicating machine). The computational ability of a fluid flow was also suggested by Moore~\cite{Mo91} as a new manifestation of complexity in fluid mechanics associated to the undecidability of some fluid particle paths rather than to a chaotic behavior.

Motivated by Tao's programme and Moore's conjecture, in~\cite{CMPP2} we constructed a stationary fluid flow on a Riemannian $3$-manifold that can simulate a universal Turing machine. We also established other universality features of the steady solutions of hydrodynamics in~\cite{CMPP1} using high-dimensional Riemannian manifolds. Key to both results was the use of techniques in symplectic and contact geometry ranging from a simple path method argument to a sophisticated $h$-principle. Our goal in this paper is to go one step further in the study of the computational power of fluid dynamics by constructing time-dependent solutions of the Euler equations that are Turing complete.

 Since we shall deal with the Euler equations on Riemannian manifolds, let us briefly introduce them.

The dynamics of an incompressible fluid flow without viscosity on a Riemannian manifold $(M,g)$ is described by the Euler equations
\begin{align}
\frac{\partial X}{\partial t}+\nabla_XX=-\nabla P\,, \qquad \Div X=0\,,
\end{align}
with initial datum $X|_{t=0}=X_0$. The unknowns  are the velocity field $X$ of the fluid (a non autonomous vector field on $M$) and the hydrodynamical pressure $P$ (a time dependent scalar function on $M$). The symbol $\nabla_X$ denotes the covariant derivative along $X$ and $\Div$ is the divergence-operator, both computed using the Riemannian metric $g$. All along this paper $M$ is assumed to be compact, orientable and without boundary, and  solutions will be smooth ($C^\infty$).

The main theorem of this article shows that there exists a (constructible) Riemannian manifold $(M,g)$ whose associated Euler equations are capable of simulating any Turing machine. Roughly speaking, this means that the halting of any Turing machine with a given input is equivalent to a certain solution of the Euler equations defined for all time entering a certain open set in the space of divergence-free vector fields (for a precise definition, see Section~\ref{S:proof}). In the statement, the space of $C^\infty$ divergence-free vector fields on $(M,g)$ is denoted by $\mathfrak X^\infty_{vol}(M)$, and it is endowed with the Whitney $C^\infty$-topology.

\begin{theorem}\label{T.main}
There exists a (constructible) compact Riemannian manifold $(M,g)$ such that the Euler equations on $(M,g)$ are Turing complete.  In particular, the problem of determining whether a certain solution to the Euler equations with initial datum $X_0$ will reach a certain open set $U\subset\mathfrak X^\infty_{vol}(M)$ is undecidable.
\end{theorem}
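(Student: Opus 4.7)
The plan is to reduce Theorem~\ref{T.main} to the stationary Turing-complete Euler flow on a compact 3-manifold constructed in \cite{CMPP2} via a simple product construction. Let $(N, g_N)$ be that Riemannian 3-manifold and let $X_s$ be the stationary Euler flow on $N$, with pressure $P_s$ and flow $\phi^s_t$. Its Turing completeness means that each Turing machine $T$ and input $w$ determine a point $p_{T,w} \in N$ such that $T$ halts on $w$ if and only if the forward orbit $\{\phi^s_t(p_{T,w}) : t \geq 0\}$ eventually enters a prescribed open ``halting region'' $V_h \subset N$.

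On the product manifold $M = N \times S^1$ with the product metric $g = g_N \oplus d\theta^2$, I would look for solutions to the time-dependent Euler equations of the separated form
$$Y(t, x, \theta) = X_s(x) + f(t, x)\, \partial_\theta,$$
where $X_s$ and $f$ are independent of $\theta$. A direct computation, using the fact that in the product metric $\nabla_{\partial_\theta} X_s = \nabla_{X_s} \partial_\theta = \nabla_{\partial_\theta}\partial_\theta = 0$, shows that such $Y$ is divergence-free and that the Euler equations on $M$, with the $\theta$-independent Ansatz $P(t,x) = P_s(x)$ for the pressure, decouple into the stationary Euler equation for $X_s$ on $N$ (which holds by hypothesis) and the linear transport equation
$$\partial_t f + X_s[f] = 0.$$
Hence $Y$ is a global smooth solution of the Euler equations on $M$, with $f(t, \cdot) = f(0, \cdot) \circ \phi^s_{-t}$.

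Given $T$ and $w$, the idea is then to choose the initial datum $Y_0 = X_s + f_0^{T,w}\, \partial_\theta$, with $f_0^{T,w}$ a smooth bump supported in a small neighborhood of $p_{T,w}$ satisfying $f_0^{T,w}(p_{T,w}) = 1$. The transported function $f(t, \cdot)$ is then a bump concentrated around $\phi^s_t(p_{T,w})$. Setting
$$U := \{Z \in \mathfrak X^\infty_{vol}(M) : \exists\, (q, \theta) \in V_h \times S^1,\ g(Z(q,\theta), \partial_\theta) > 1/2\},$$
which is open in the Whitney $C^\infty$-topology, one gets $Y(t) \in U$ for some $t \geq 0$ if and only if the orbit of $p_{T,w}$ ever reaches $V_h$, if and only if $T$ halts on $w$. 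The undecidability statement in Theorem~\ref{T.main} then follows immediately from undecidability of the halting problem.

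The main obstacle in this plan is quantitative: one has to ensure that the bump $f(t,\cdot) = f_0^{T,w} \circ \phi^s_{-t}$ intersects $V_h$ only when the central orbit $\phi^s_t(p_{T,w})$ genuinely enters it, so that non-halting computations cannot produce spurious detections by orbits grazing $\partial V_h$. This is handled by choosing $V_h$ slightly smaller than the true halting open set provided by \cite{CMPP2}, using compactness to guarantee that non-halting orbits stay a definite distance from this shrunken $V_h$, and then taking the support of $f_0^{T,w}$ correspondingly narrow. With this calibration the equivalence above is exact, yielding both the Turing completeness of the Euler equations on $(M,g)$ and the undecidability of the associated reachability problem.
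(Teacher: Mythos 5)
Your decoupling computation on $M = N \times S^1$ is correct: with the product metric and a $\theta$-independent Ansatz $Y = X_s + f\,\partial_\theta$, the Euler equations split into the steady equation for $X_s$ and the linear transport equation $\partial_t f + X_s[f] = 0$, solved by $f(t,\cdot) = f_0\circ\phi^s_{-t}$, so $Y(t)$ is indeed a global smooth Euler solution on $M$. The gap lies precisely in the step you flag as the ``main obstacle,'' and the calibration you propose cannot close it --- the difficulty is structural, not quantitative. The set $\{f_0^{T,w} > 1/2\}$ is an open neighborhood of $p_{T,w}$, and in the construction of~\cite{CMPP2} the points $p_{T,w}$ lie on a Cantor set on which the return map acts as a generalized shift; every open neighborhood of $p_{T,w}$ therefore contains points $p_{T,w'}$ encoding inputs $w'$ on which the universal machine $T$ \emph{does} halt. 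Consequently $\phi^s_t(\{f_0^{T,w}>1/2\})$ enters $V_h$ at some finite time regardless of whether $T$ halts on $w$, producing a false positive. This is not an artifact of a particular encoding: as recalled in Section~\ref{S:sphere}, Bournez--Gra\c{c}a--Hainry~\cite{BGH} show that a universal Turing machine cannot be simulated robustly on a compact space, and ``robustly'' is exactly the requirement that an entire open neighborhood of $p_{T,w}$ share the halting fate of $p_{T,w}$. In addition, the compactness argument you invoke fails independently: the support $\phi^s_t(\supp f_0)$ of the transported scalar is stretched by the (necessarily chaotic) flow into filaments of unbounded diameter, and a non-halting orbit need not remain at any definite distance from a shrunken $V_h$. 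Compactness of $N$ gives you volume preservation, not a uniform separation.

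The paper avoids both issues by a fundamentally different mechanism. Instead of transporting a passive scalar by a fixed steady flow, it constructs a Turing-complete \emph{polynomial} vector field $Y$ on $\SS^{17}$ (Sections~\ref{sec:polR}--\ref{S:sphere}), and then invokes Torres de Lizaur's embedding theorem~\cite{TL}, which yields an injective immersion $\Phi:\SS^{17}\to\mathfrak X^\infty_{vol}(M)$ whose image is a finite-dimensional \emph{invariant} submanifold of the Euler phase space on which the Euler evolution is conjugate to the flow of $Y$. The Euler trajectory is then the single point $X_t = \Phi(\phi_t(p))$, not a spread-out scalar, so detection by an open set $V_{t^*}$ with $V_{t^*}\cap\Phi(\SS^{17})=\Phi(U_{t^*})$ tracks the finite-dimensional orbit exactly. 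The Cantor-set obstruction is sidestepped upstream: in Proposition~\ref{corTC} the configurations of $T$ are encoded at integer lattice points of $\RR^3$, where distinct configurations are at distance at least $1$, so the relevant ``open set'' $U_\varepsilon^{t^*}$ can be taken as a small ball around an isolated point rather than a neighborhood in a Cantor set.
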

\begin{remark}
The manifold $M$ is diffeomorphic to $SO(N)\times \mathbb T^{N}$ for some (large enough) integer $N$. The Riemannian metric $g$ is constructible but it has an involved expression. The dimension is very large and can be estimated as $\text{dim}(M)\lesssim 10^{35}$.
\end{remark}

A remarkable consequence of our result is the undecidability of the evolution of the Euler equations as an infinite dimensional dynamical system (as hard as the halting problem for Turing machines). This can be understood as a new complicated behavior of smooth solutions to the Euler equations, which complements other complex phenomena in fluid mechanics such as Lagrangian turbulence. Additionally, it raises the question of whether determining if an initial datum will blow-up or not under its Euler evolution is undecidable.

To finish this introduction we remark that a surprising spin-off of our method of proof yields a counterexample to a conjecture raised by Moore in 1998. Moore suggested~\cite{Mo} that no analytic function on a compact space can simulate a universal Turing machine with reasonable input and output encodings. A simple variation of the construction to prove Theorem~\ref{T.main} allows us to give a counterexample to this conjecture:

\begin{theorem}\label{T.Moore}
There exists a  Turing complete analytic diffeomorphism on the sphere $\mathbb{S}^{17}$.
\end{theorem}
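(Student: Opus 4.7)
The plan is to adapt the main construction behind Theorem~\ref{T.main} so that it runs in the real analytic category, and then to extract from it a single analytic diffeomorphism of $\mathbb{S}^{17}$. The proof of Theorem~\ref{T.main} encodes a universal Turing machine into the trajectories of a contact-geometric (Reeb) flow via an area-preserving combinatorial ``tile'' that implements one step of the transition rule. Re-examining that construction, essentially every ingredient -- contact forms, symplectic embeddings, Reeb fields, transverse Poincar\'e sections -- admits an analytic version, \emph{except} for the smoothing/gluing steps that rely on partitions of unity. These must be replaced by explicit analytic interpolations: since the transition rule of a universal Turing machine is a finite combinatorial object, it can be realized exactly by an analytic model (polynomial, trigonometric, or exponential) on each tile, and the tiles can be glued by placing them inside \emph{disjoint} analytic regions of the ambient manifold rather than via overlapping supports that would require bump functions.

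Once an analytic Turing complete vector field $R$ has been constructed on some compact analytic contact manifold, its time-$1$ flow $\phi=\exp(R)$ is automatically a real analytic diffeomorphism, and Turing completeness transfers from the flow to $\phi$: the orbit $\{\phi^n(x_0)\}_{n\geq0}$ meets a prescribed analytic open set if and only if the Turing machine halts on the encoded input. To place $\phi$ on $\mathbb{S}^{17}$ specifically, I would perform the construction directly on the standard contact sphere $(\mathbb{S}^{17},\alpha_{std})\subset\mathbb{C}^{9}$, analytically deforming $\alpha_{std}$ into an analytic contact form whose Reeb dynamics embed the tile mechanism above. The dimension $17=2\cdot 8+1$ is the smallest odd dimension $2n+1$ in which the analytic tiles, together with the symbolic encoding of the tape and the halting test, fit simultaneously inside a single analytic page of an open book decomposition of $\mathbb{S}^{2n+1}$.

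The main obstacle is the rigidity of the real analytic category: one cannot cut off an analytic vector field to zero outside a neighborhood of the ``active region'' of the Turing simulation, since an analytic function vanishing on an open set vanishes identically. Consequently, the dynamics must be \emph{intrinsically global}, with the Turing simulation occurring on an analytically distinguished subregion while the complement carries non-trivial but computationally irrelevant analytic dynamics (for instance, a perturbation of the Hopf flow). Controlling both regions simultaneously requires selecting the analytic models so that the transition rule is realized \emph{exactly}, not merely approximately, on the active region -- any $C^{0}$-error, however small, would corrupt the symbolic dynamics. This exactness requirement is the delicate step and is ultimately what drives the ambient dimension up to $17$; once it is achieved, the extension to the entire sphere follows from standard analytic extension tools (Grauert approximation and analytic tubular neighborhoods), and the time-$1$ map of the resulting analytic Reeb field is the claimed Turing complete analytic diffeomorphism of $\mathbb{S}^{17}$.
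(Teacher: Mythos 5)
Your proposal sets out from a misreading of how Theorem~\ref{T.main} is actually proved. The paper does not build the Turing complete Euler flow out of a contact-geometric ``tile'' machine; that is the construction from \cite{CMPP2}, which is precisely the one the authors note cannot be made analytic (it relies on gluings that are not robust under perturbation, cf. \cite{BGH}). The present paper instead takes the polynomial ODE of Gra\c{c}a--Campagnolo--Buescu (Theorem~\ref{thm:port}), autonomizes it, pushes it to $\mathbb{S}^n$ by stereographic projection after a rational reparametrization that removes the pole, and only then feeds the resulting \emph{polynomial} sphere field into Torres de Lizaur's embedding theorem. The dimension $17$ has nothing to do with fitting tiles into an open book decomposition; it is $16+1$, where $16=m+3$ is the number of variables Hainry's version of Gra\c{c}a et al.\ needs to simulate a universal Turing machine and the $+1$ is the autonomization variable $\omega$.

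Because you start from a smooth, partition-of-unity-based construction, you are forced to invent analytic surrogates (disjoint analytic regions, exact analytic realizations of the transition rule, Grauert approximation), and you yourself identify the two fatal obstructions: analytic functions cannot be cut off, and any $C^0$-error in your ``exact'' tiles would corrupt the symbolic dynamics. Neither is resolved in your sketch, and the second is in fact a red herring that the paper sidesteps cleanly: the Gra\c{c}a et al.\ simulation is not exact but carries a controlled error $\varepsilon<1/4$, which suffices because distinct configuration codes are integer points at mutual distance $\geq 1$. The paper's actual proof of Theorem~\ref{T.Moore} is far more elementary than what you propose: the reparametrized field $\widetilde P=\frac{1}{(r^2+1)^d}P$ on $\mathbb{R}^{m+4}$ is analytic and complete, so its time-$\delta$ flow $\phi_\delta$ is an analytic diffeomorphism; one checks directly (using the monotone increase of $\widetilde\omega$, the bounded speed so that each interval $[t_i,t_i+A_i]$ with $\widetilde\omega\in[i,i+\tfrac12]$ has length $>\tfrac12$ and hence contains a multiple of $\delta\in(0,\tfrac12)$, and the $2\varepsilon<1$ separation argument above) that $\phi_\delta$ is Turing complete; and stereographic projection conjugates $\phi_\delta$ to an analytic diffeomorphism $\rho_\delta$ of $\mathbb{S}^{17}$ fixing the north pole. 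No contact geometry, Reeb dynamics, analytic gluing, or Grauert-type approximation enters at any point. Your proposal, as written, would need a genuinely new and currently unsupplied mechanism to make the tile construction analytic, whereas the paper avoids the problem entirely by starting from a source that is already polynomial.
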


\section{Turing machines and universality}\label{prelim}

In this section we briefly recall the concept of Turing machine and its connections with dynamics; in particular, we shall provide a precise definition of what we mean by a dynamical system being Turing complete (or Turing universal).

A Turing machine $T$ is defined by the following data:
\begin{itemize}
\item A finite set $Q$ of ``states'' including an initial state $q_0$ and a halting state $q_{halt}$.
\item A finite set $\Sigma$ which is the ``alphabet'' with cardinality at least two.
\item A transition function $\delta:(Q\times \Sigma) \longrightarrow (Q\times \Sigma \times \{-1,0,1\})$.
\end{itemize}

The evolution of a Turing machine is described as follows. Let us denote by $q\in Q$ the current state, and by $t=(t_n)_{n\in \mathbb{Z}}\in \Sigma^\mathbb{Z}$ the current tape. For a given Turing machine $(Q,q_0,q_{halt},\Sigma,\delta)$ and an input tape $s=(s_n)_{n\in \mathbb{Z}}\in \Sigma^{\mathbb{Z}}$ the machine runs applying the following algorithm:

\begin{enumerate}
\item Set the current state $q$ as the initial state and the current tape $t$ as the input tape.
\item If the current state is $q_{halt}$ then halt the algorithm and return $t$ as output. Otherwise compute $\delta(q,t_0)=(q',t_0',\varepsilon)$, with $\varepsilon \in \{-1,0,1\}$.
\item Replace $q$ with $q'$ and $t_0$ with $t_0'$.
\item Replace $t$ by the $\varepsilon$-shifted tape, then return to step $(2)$. Following Moore~\cite{Mo}, our convention is that $\varepsilon=1$ (resp. $\varepsilon=-1$) corresponds to the left shift (resp. the right shift).
\end{enumerate}

A Turing machine can be simulated by a dynamical system (a vector field or a diffeomorphism). Following~\cite{Mo91} we can define Turing completeness as:

\begin{definition}\label{TC}
Let $X$ be a vector field on a manifold $M$. We say it is Turing complete if for any integer $k\geq 0$, given a Turing machine $T$, an input tape $t$, and a finite string $(t_{-k}^*,...,t_k^*)$ of symbols of the alphabet, there exist an explicitly constructible point $p\in M$ and an open set $U\subset M$ such that the orbit of $X$ through $p$ intersects $U$ if and only if $T$ halts with an output tape whose positions $-k,...,k$ correspond to the symbols $t_{-k}^*,...,t_k^*$. A completely analogous definition holds for diffeomorphisms of $M$.
\end{definition}

We want to observe that in the construction we presented in~\cite{CMPP2}, the point $p$ depends on all the information, i.e., the Turing machine $T$, the input tape $t$ and the finite string $t^*=(t_{-k}^*,...,t_k^*)$, but the set $U$ is always the same (related to the halting state of a universal Turing machine). This is a technical difference with the construction introduced by Tao in~\cite{T1}, although they both have the same computational power. In Tao's notion of Turing completeness, the point $p$ depends only on the Turing machine $T$ and the input $t$; then, for any given finite string $t^*:=(t_{-k}^*,...,t_k^*)$ there is an open set $U_{t^*}$ such that the orbit through $p$ intersects $U_{t^*}$ if and only if $T$ halts with input $t$ and output whose positions $-k,...,k$ correspond to $t^*$. We shall also consider this dependence of the open set $U$ with $t^*$ in the constructions of Turing complete dynamics of the present article.

\begin{remark}
An important property of a Turing complete dynamical system is the existence of trajectories which exhibit undecidable long-term behavior. Specifically, it is undecidable to determine if the trajectory through an explicit point will intersect an explicit open set of the space. This follows from the undecidability of the halting problem for Turing machines.
\end{remark}

In the construction we present in Section~\ref{sec:polR} we make use of a special class of Turing machines, which is known to have the same computational power as a general Turing machine. Indeed, without any loss of generality, we may assume that the alphabet is $\Sigma=\{0,1,...,9\}$, where $0$ represents a special character referred to as the ``blank symbol". Additionally, we can also assume that a given tape of the machine has only a finite amount of symbols different from the blank symbol, i.e., for a (fixed) possibly large integer $k_0>0$, any tape is of the form
\begin{equation}\label{eq:string}
 ...00t_{-k_0}...t_{k_0}00...
\end{equation}
with $t_i\in\Sigma$. In particular, at any given step, there are only (at most) $2k_0+1$ non-blank symbols.

The space of configurations of the machine $T$ described above is of the form $Q\times A \subset Q\times \Sigma^{\mathbb{Z}}$, where $A$ is the subset of strings of the form~\eqref{eq:string}. A step of the algorithm is then represented by a global transition function
$$ \Delta: Q\times A \longrightarrow Q\times A\,,$$
where we set $\Delta(q_{halt},t):=(q_{halt},t)$ for any tape $t$.

\section{Turing complete polynomial vector fields in $\mathbb{R}^n$} \label{sec:polR}

In this section we construct a polynomial vector field on $\mathbb{R}^n$, for some (possibly) large $n$, which is Turing complete in the sense of Definition~\ref{TC}. Key to our construction is a result from~\cite{portu}, where it was shown that non-autonomous polynomial ODEs can simulate the transition function of a Turing machine.

Let $T=(Q,\Sigma, q_0,q_{halt},\delta)$ be any fixed Turing machine. We first recall the result in~\cite{portu} that allows us to simulate $T$ via a polynomial ODE. To this end, we need to encode each configuration $(q,t)\in Q\times \Sigma^{\mathbb{Z}}$ as a constructible point $x\in \mathbb{N}^3$. As mentioned in Section~\ref{prelim}, we may assume that $\Sigma=\{0,1,...,9\}$, where $0$ is the blank symbol. If $r$ denotes the cardinality of the space of states $Q$, we represent the elements of $Q$ by $\{1,...,r\}$. A given tape of the machine is of the form~\eqref{eq:string}. It is easy to encode such a tape in $\mathbb{N}^2$ by assigning to it the pair of natural numbers
\begin{align*}
y_1&=t_0+t_1\cdot 10+...+t_{k_0}\cdot 10^{k_0}\\
y_2&=t_{-1}+t_{-2}\cdot 10+...+t_{-k_0}\cdot 10^{k_0-1}
\end{align*}
The configuration $(q,t)$ is then represented by the point $x:=(y_1,y_2,q)\in \mathbb{N}^3$. Denote by $\phi$ the map that assigns to each configuration in $Q\times A$ its associated point in $\mathbb{N}^3$. The global transition function $\Delta$ can now be seen as a map from $\phi(Q\times A)\subset \mathbb{N}^3$ to $\phi(Q\times A)$. By extending this map as the identity on those points in $\mathbb{N}^3$ which are not in the image of $\phi$, we get a map from $\mathbb{N}^3$ to $\mathbb{N}^3$. To simplify the notation, we will still denote such a map by $\Delta:\mathbb{N}^3\longrightarrow \mathbb{N}^3$. (Observe that features like injectivity of $\Delta$ are not relevant here.)

The main Theorem in~\cite{portu} can then be stated as follows:

\begin{theorem}\label{thm:port}
Let $\Delta: \mathbb{N}^3 \to \mathbb{N}^3$ be the global transition function of a Turing machine $T$. Fix a constant $\varepsilon\in [0,\frac{1}{4}]$. There is a (constructible) polynomial $\widetilde p_T:\mathbb{R}^{m+4}\rightarrow \mathbb{R}^{m+3}$, for some $m\in \mathbb{N}$, and a (constructible) point $\widetilde y_0\in \mathbb{R}^m$ such that the ODE
$$ \frac{dz}{d\tau}=\widetilde p_T(\tau,z) $$
satisfies the following property. For every point $x_0\in \mathbb{N}^3\subset \RR^3$, the solution $z(\tau)$ to the ODE with initial condition $(x_0,\widetilde y_0)$ at $\tau=0$ satisfies
$$ |z_1(\tau)-\Delta^j(x_0)| < \varepsilon, $$
for all $\tau \in [j,j+\frac{1}{2}]$ and $j\in \mathbb{N}$, where $z\equiv(z_1,z_2)$ with $z_1\in \mathbb{R}^3$ and $z_2\in \mathbb{R}^m$.
\end{theorem}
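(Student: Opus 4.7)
The plan is to follow the standard strategy for proving that polynomial ODEs can simulate discrete computation, along the lines of Branicky and Graça--Campagnolo--Buescu. The first step is to rewrite the global transition function $\Delta\colon\mathbb{N}^3\to\mathbb{N}^3$ as a composition of elementary integer operations on the triple $(y_1,y_2,q)$. Reading the symbol under the head is $t_0=y_1\bmod 10$, the current state is $q$, the lookup $\delta(q,t_0)=(q',t_0',\varepsilon)$ takes finitely many values (one per pair $(q,t_0)$), and the tape update plus head shift can be expressed in terms of integer division by $10$ and multiplication by $10$. Thus $\Delta$ becomes a piecewise polynomial map whose pieces are selected by the discrete value of $(q,t_0)$.

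The second step is to replace each of these integer operations by a polynomial ODE that drives an auxiliary variable toward the desired target value. The basic building block is an attractor of the form $\dot u=c(F(z)-u)^{2k+1}$, which, run for a fixed amount of time, brings $u$ arbitrarily close to the polynomial target $F(z)$; by composing such blocks one implements addition, multiplication, iterated squaring, and polynomial approximations of $\bmod\,10$ and the step/selector functions needed to choose the correct branch of $\delta$. The conditional ``if $(q,t_0)=(i,\sigma)$ then $\ldots$'' is realized as a polynomial indicator supported near $(i,\sigma)$, multiplied into the corresponding update rule and summed over all finitely many branches. These gadgets are then chained within each unit interval $[j,j+1]$: one schedules the elementary operations in successive sub-intervals using a polynomial phase variable (a ``clock''), so that during $[j,j+\tfrac12]$ the first three coordinates $z_1$ hold the value $\Delta^j(x_0)$ up to error $\varepsilon$, while during $[j+\tfrac12,j+1]$ the auxiliary coordinates $z_2$ compute the next configuration and $z_1$ is steered to $\Delta^{j+1}(x_0)$. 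The dimension $m$ is determined by the number of auxiliary registers needed to store intermediate results and to encode the clock.

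The step I expect to be the main obstacle is the uniform control of errors across arbitrarily many iterations. Each sub-block introduces a small deviation from the exact integer value, and without care this error would accumulate and eventually violate the $\varepsilon$-tolerance. The remedy, which must be built into the construction, is a contraction at the end of every cycle: the attractor coefficient $c$ and the exponent of the driving polynomial are chosen so that the ``rounding'' ODE $\dot u=c\bigl(\mathrm{round}(u)-u\bigr)^{2k+1}$ (itself polynomial once the rounding is replaced by its polynomial surrogate near integer points) damps any accumulated discrepancy back below $\varepsilon$ before the next cycle begins. Once this self-correcting property is established for the elementary blocks, combining them into the full simulation of $\Delta$ yields the polynomial $\widetilde p_T$ and the explicit auxiliary initial condition $\widetilde y_0$ as a direct algorithmic construction from the finite data $(Q,\Sigma,\delta)$ defining $T$.
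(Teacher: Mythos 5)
This statement is not proved in the paper at all: it is quoted verbatim as the main theorem of Graça--Campagnolo--Buescu \cite{portu}, on which the whole of Section~\ref{sec:polR} is built. There is therefore no in-paper proof to compare against, and what you have sketched is, in outline, the argument of \cite{portu} itself. Your high-level plan --- encode the configuration in $\mathbb{N}^3$, express $\Delta$ as a finite case split over $(q,t_0)$, realize each arithmetic and branching operation by Branicky-style targeting equations $\dot u = c\,(F-u)^{2k+1}$, alternate a compute half-cycle with a hold half-cycle using a clock, and correct the error at the end of each cycle --- is exactly the right strategy, and you have correctly identified uniform error control over arbitrarily many iterations as the crux.

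The gap is in the step you describe as ``rounding is replaced by its polynomial surrogate near integer points.'' No polynomial $q(u)$ can serve as a surrogate for $\mathrm{round}(u)-u$ in the regime actually needed: the corrector must vanish (with the correct sign pattern) near \emph{every} integer, and the tape encoding $(y_1,y_2)$ grows without bound as the machine runs, so this behavior is needed on all of $\mathbb{R}_{\geq 0}$; a nonzero polynomial has only finitely many zeros. The same obstruction kills the ``polynomial phase variable (clock)'': a nonconstant polynomial in $\tau$ is eventually monotone and cannot oscillate with period one. What \cite{portu} actually does is first build the simulation from closed-form \emph{analytic} transcendental blocks --- a periodic error-corrector of the type $x-\tfrac{1}{2\pi}\sin(2\pi x)$, sigmoidal switches built from $\arctan$, and a $\sin^2(\pi\tau)$-type clock --- and then \emph{polynomialize}: each transcendental function on the right-hand side is replaced by new state variables obeying polynomial ODEs (for instance $\sin(2\pi\tau)$, $\cos(2\pi\tau)$ arise from a $2$-dimensional linear system), so that the augmented vector field $\widetilde p_T$ is genuinely polynomial in $(\tau,z)$. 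This polynomialization, together with the bookkeeping that the auxiliary variables remain bounded and do not degrade the $\varepsilon$-estimate, is the nontrivial content you have glossed over. Without it your construction yields an analytic, not a polynomial, right-hand side, and the later steps of the paper (the reparametrization $\widetilde P = 2^d(1+r^2)^{-d}P$ and the stereographic lift in Section~\ref{S:sphere}, and the embedding theorem of \cite{TL} in Section~\ref{S:proof}) genuinely require polynomiality, not just analyticity.
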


The main result of this section is that, invoking this theorem, we can deduce that for any Turing machine, there is an autonomous polynomial vector field in some Euclidean space that simulates the machine in the same sense as Definition~\ref{TC}. That is, for every input $x_0$ and output $t^*$, there is a point and an open set such that the orbit of the vector field through $p$ intersects $U$ if and only if the machine $T$ halts with input $x_0$ and output whose positions $-k,...,k$ correspond to $t^*$.

\begin{proposition}\label{corTC}
Let $T$ be a Turing machine. There is a (constructible) polynomial vector field $p_T:\mathbb{R}^{m+4}\rightarrow \mathbb{R}^{m+4}$ and a (constructible) point $y_0\in \mathbb{R}^{m+1}$ such that the autonomous ODE
$$\frac{dZ}{d\tau}=p_T(Z)\,,$$
satisfies the following property. For any nonnegative integer $k\leq k_0$, take any finite substring $t^*=(t^*_{-k},...,t^*_k)\in \Sigma^{2k+1}$. There is an open set $U_{t^*} \subset \mathbb{R}^{m+4}$ such that for every $x_0\in \mathbb{N}^3\subset \RR^3$, the orbit of $Z(\tau)$ through $Z_0=(x_0,y_0)$ intersects $U_{t^*}$ if and only if the Turing machine $T$ with input $x_0$ halts with an output whose positions $-k,...,k$ correspond to $t^*_{-k},...,t^*_k$.
\end{proposition}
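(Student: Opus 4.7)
My plan is to derive Proposition~\ref{corTC} from Theorem~\ref{thm:port} in two moves: first, autonomize the non-autonomous polynomial ODE by promoting the time variable to an extra coordinate; second, engineer the open set $U_{t^*}$ so that a trajectory visits it exactly when the simulated machine has reached the halting state with the desired output on positions $-k,\dots,k$.

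For the autonomization step, I would introduce a new coordinate $s$ governed by $\dot s=1$ and write $Z=(z_1,s,z_2)$ with $z_1\in\mathbb{R}^3$, $s\in\mathbb{R}$, $z_2\in\mathbb{R}^m$. The polynomial field $p_T(Z):=\bigl(\widetilde p_T(s,z_1,z_2),1\bigr)$ on $\mathbb{R}^{m+4}$, together with $y_0:=(0,\widetilde y_0)\in\mathbb{R}^{m+1}$, packages the hypothesis of Theorem~\ref{thm:port} in the desired autonomous form, and $s$ coincides with $\tau$ along the orbit starting at $(x_0,y_0)$. I fix once and for all a small $\varepsilon<1/4$ in Theorem~\ref{thm:port}, so that $|z_1(\tau)-\Delta^j(x_0)|<\varepsilon$ for all $\tau\in[j,j+\tfrac12]$ and $j\in\mathbb{N}$.

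For the open set, I exploit the encoding: the configuration is stored in $z_1=(y_1,y_2,q)$, with the digits of $y_1$ (resp.\ $y_2$) recording tape positions $0,1,\dots,k_0$ (resp.\ $-1,\dots,-k_0$). Setting $a_1^*=\sum_{i=0}^{k}t_i^*10^{i}$ and $a_2^*=\sum_{i=1}^{k}t_{-i}^*10^{i-1}$, I would let
$$V_{t^*}=\bigcup_{\substack{a_1\equiv a_1^*\,(\mathrm{mod}\,10^{k+1})\\ a_2\equiv a_2^*\,(\mathrm{mod}\,10^{k})}}B_\varepsilon(a_1)\times B_\varepsilon(a_2)\times B_\varepsilon(q_{halt})\subset\mathbb{R}^3,$$
where the union is over $(a_1,a_2)\in\mathbb{N}^2$. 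This is open, and the integer points inside it are precisely the configurations with halting state whose tape agrees with $t^*$ on positions $-k,\dots,k$. To screen out the transition regimes, I would also take $W=\bigcup_{j\in\mathbb{N}}(j,j+\tfrac12)\subset\mathbb{R}$ and declare $U_{t^*}=V_{t^*}\times W\times\mathbb{R}^m$, after reordering the coordinates of $Z$.

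The forward implication is immediate: if $T$ halts at step $j^*$ with the prescribed output, then $\Delta^j(x_0)$ stabilizes inside $V_{t^*}\cap\mathbb{N}^3$ for all $j\geq j^*$, and Theorem~\ref{thm:port} places the orbit inside $U_{t^*}$ on every interval $[j,j+\tfrac12]$. Conversely, if $Z(\tau^*)\in U_{t^*}$ then $\tau^*\in(j,j+\tfrac12)$ for some $j$, and the bound $\varepsilon<1/4$ combined with the discreteness of $\mathbb{N}^3$ forces $\Delta^j(x_0)$ itself to lie in $V_{t^*}$, so $T$ must have halted by step $j$ with the desired output. I expect the only real obstacle to be ruling out spurious hits on the uncontrolled transition intervals $(j+\tfrac12,j+1)$, and inserting $W$ into $U_{t^*}$—which is only possible because the autonomization step turned $\tau$ into a genuine phase-space coordinate—is precisely the mechanism that dispatches this issue.
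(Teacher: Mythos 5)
Your proof is correct and follows essentially the same strategy as the paper: autonomize $\widetilde p_T$ by adjoining a clock coordinate with unit speed, then build $U_{t^*}$ as a product of an $\varepsilon$-neighborhood of the encoded halting configurations with a union of time-windows $(j,j+\tfrac12)$, so that the clock coordinate filters out the uncontrolled parts of each unit interval. The only cosmetic differences are the ordering of coordinates and that you describe the configuration part of the open set via digit congruences (an infinite, but harmless, union of boxes) rather than the paper's finite union of $\varepsilon$-balls around actual halting configurations.
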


\begin{proof}
To simplify the exposition, we will say that a machine $T$ with some input halts with output $t^*$ if the output of the machine has in positions $-k,...,k$ the symbols $t^*_{-k},...,t^*_k$ where $t^*=(t^*_{-k},...,t^*_k)$.

For a given Turing machine $T$, let $\widetilde p_T:\mathbb{R}^{m+4}\rightarrow \mathbb{R}^{m+3}$ be the non-autonomous polynomial field given by Theorem~\ref{thm:port}. As explained above, the initial configuration of $T$ can be represented by a point $x_0\in\mathbb N^3\subset \RR^3$.

We define the (time-independent) polynomial vector field
$$ p_T(Z):=(1,\widetilde p_T(\omega,z)):\mathbb{R}^{m+4}\rightarrow \mathbb{R}^{m+4}\,, $$
where $Z:=(\omega,z)\in\RR\times \RR^{m+3}$. The associated polynomial ODE is
$$\frac{dZ}{d\tau}=(1,\tilde{p}_M(\omega,z))\,, $$
where $Z$ is now a coordinate in $\mathbb{R}^{m+4}$. Denote by $V$ the open set $V:=\bigcup_{i\in \mathbb{N}} (i,i+\delta)\subset \mathbb{R}$ for some (fixed) small $\delta>0$. For a given substring $t^*=(t^*_{-k},...,t^*_k)$, we claim that the open set
$$ U_{t^*}:=V \times U_\varepsilon^{t^*}\times \mathbb{R}^{m} \subset \RR^{m+4} $$
and the initial condition $Z_0=(0,x_0,\widetilde y_0)$ satisfy the required properties. Here $\widetilde y_0\in\RR^m$ is the point constructed in Theorem~\ref{thm:port}, and $U_\varepsilon^{t^*}\subset\RR^3$ is an $\varepsilon$-neighborhood of the set of (finitely many) points in $\RR^3$ associated to a configuration of $T$ of the form $(q_{halt},\overline t)$ with a tape $\overline t$  that has the symbols $t^*_{-k},...,t^*_k$ in positions $-k,...,k$ (all along this proof, $\varepsilon$ is any small enough constant).

Assume that $T$ halts with input $x_0$ and output $t^*$. Then there is some $j\in \mathbb{N}$ such that $\Delta^j(x_0)=(y_1,y_2,q)$ with $q=q_{halt}$, $y_1=t^*_0+t^*_1\cdot 10+...+t^*_k\cdot 10^k+ ...$ and
$y_2=t^*_{-1}+t^*_{-2}\cdot 10+...+t^*_{-k}\cdot 10^{k-1}+...$. By the properties of $\widetilde p_T$, the solution $Z(\tau)$ with initial datum $(0,x_0,\widetilde y_0)$ satisfies, for $\tau\in [j,j+\frac{1}{2}]$
$$|z_1(\tau)-\Delta^j(x_0)| < \varepsilon,$$
where $Z\equiv (\omega, z_1,z_2)$ with $\omega \in \mathbb{R}$, $z_1\in \mathbb{R}^3$ and $z_2\in \mathbb{R}^m$. Furthermore, since $\omega$ satisfies $$\frac{d \omega}{d\tau}=1\,,$$
we infer that for $\tau\in(j,j+\delta)$ the variable $\omega$ is in the interval $(j,j+\delta)$, which in turn implies that the orbit $Z(\tau)$ intersects $U_{t^*}$.

Conversely, assume that the orbit of the vector field defining our ODE with initial datum $(0,x_0,\widetilde y_0)$ intersects $U_{t^*}$. Equivalently, for the solution $Z(\tau)$ to the ODE with initial condition $Z(0)=(0,x_0,\widetilde y_0)$, there is a time $\hat \tau$ such that $Z(\hat \tau) \in U_{t^*}$. By construction of $U_{t^*}$, this means that there is a point $(y_1,y_2,q_{halt})\in \RR^3$ (uniquely defined by $U_{t^*}$) and some $j\in \mathbb{N}$ such that
\begin{equation*}
\begin{cases}
|z_1(\hat \tau)-(y_1,y_2,q_{halt})| < \varepsilon\,,\\
\omega(\hat \tau) \in (j,j+\delta)\,.
\end{cases}
\end{equation*}
By the equation $\frac{d\omega}{d\tau}=1$ we deduce that $\hat \tau\in (j,j+\delta)$. Therefore, by the properties of $z_1$ and $\widetilde p_T$, it easily follows that $z_1(\hat \tau)$ satisfies
$$|z_1(\hat \tau)-\Delta^{j}(x_0)| < \varepsilon\,. $$
We then deduce that $|(y_1,y_2,q_{halt})-\Delta^{j}(x_0)|< 2\varepsilon$. Since the only point representing a configuration of $T$ that lies in a $2\varepsilon$-neighborhood of $(y_1,y_2,q_{halt})$ is this very same point, we conclude that $\Delta^{j}(x_0)=(y_1,y_2,q_{halt})$. This shows that the machine $T$ halts with input $x_0$ and output $t^*$, which concludes the proof.
\end{proof}

Obviously, if we choose $T$ to be a universal Turing machine, we obtain from Proposition~\ref{corTC} a polynomial vector field in some Euclidean space which is Turing complete:

\begin{corollary}\label{corTC2}
There exists a (constructible) Turing complete polynomial vector field $P$ in $\RR^n$ provided that $n$ is large enough.
\end{corollary}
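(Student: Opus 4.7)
The plan is a one-line reduction: invoke Proposition~\ref{corTC} with $T$ chosen to be a fixed universal Turing machine $U$. Writing $P := p_U : \RR^{m+4} \to \RR^{m+4}$, $n := m+4$, and $y_0 \in \RR^{m+1}$ for the data produced by that proposition, I would claim that this single vector field $P$ already satisfies Definition~\ref{TC} on $\RR^n$, uniformly in all other Turing machines.

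To check the definition, fix an arbitrary Turing machine $T$, input $t$, and desired output specification $(t^*_{-k}, \ldots, t^*_k)$. By the universality of $U$ there is a constructible encoding $x_0 = x_0(T, t) \in \NN^3$ such that $U$ with input $x_0$ halts iff $T$ with input $t$ halts, and the halting configuration of $U$ decodes to the output of $T$ via a fixed rule. The initial point will be $p := (x_0, y_0) \in \RR^n$. The open set will be a finite union
\[
U_{t^*} \;:=\; \bigcup_{\tilde t^*} U_{\tilde t^*},
\]
where each $U_{\tilde t^*}$ is the open set supplied by Proposition~\ref{corTC} (applied to the fixed machine $U$), and the index $\tilde t^*$ ranges over those substrings in $U$'s alphabet that decode to a $T$-output whose positions $-k,\ldots,k$ are exactly $(t^*_{-k},\ldots,t^*_k)$. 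The convention of Section~\ref{prelim} bounding the non-blank support of any tape by $k_0$ makes this index set finite, so the union is a genuine (constructible) open set.

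Combining the two ingredients, Proposition~\ref{corTC} says the orbit of $P$ through $p$ meets $U_{t^*}$ iff $U$ halts on $x_0$ with one of the admissible halting outputs $\tilde t^*$, and universality of $U$ then says this happens iff $T$ halts on $t$ with the prescribed symbols in positions $-k,\ldots,k$. Since $U$ is fixed once and for all, the vector field $P$ is independent of $T$, $t$, and $t^*$, which is precisely Definition~\ref{TC}. The only mildly delicate point, which I would treat as standard rather than a genuine obstacle, is pinning down the universal encoding/decoding rule well enough that the collection of admissible $\tilde t^*$ is finite and explicitly listable; this is a classical feature of universal Turing machines and introduces nothing new beyond what is already packaged inside Proposition~\ref{corTC}.
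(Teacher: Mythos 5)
Your proposal is correct and takes essentially the same approach as the paper, which dispatches this corollary in a single sentence by applying Proposition~\ref{corTC} to a fixed universal Turing machine. You simply spell out the encoding/decoding bookkeeping (the finite union of open sets $U_{\tilde t^*}$ to handle the universal machine's output representation) that the paper treats as immediate.
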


\begin{remark}\label{rem:dim}
As observed in \cite{Ha}, a universal Turing machine can be simulated via Theorem~\ref{thm:port} with $m+3=16$ variables and the degree $d$ of the polynomial $\widetilde p_N$ is equal to $56$. Hence Corollary~\ref{corTC2} yields a Turing complete polynomial vector field $P$ in $\mathbb{R}^{17}$ (from Proposition \ref{corTC}) of degree $56$.
\end{remark}

\section{Turing complete polynomial vector field on $\mathbb S^n$}\label{S:sphere}

In this section we construct a polynomial vector field on the $n$-dimensional sphere which is Turing complete. We remark that in~\cite{CMPP2} we constructed smooth ($C^\infty$) vector fields on $\mathbb S^3$ that are Turing complete. However, it is not obvious how to obtain polynomial vector fields (even analytic) from the aforementioned construction because it does not retain its computational power after an arbitrarily small perturbation (it is not possible to robustly simulate a universal Turing machine on a compact space~\cite{BGH}).

It is convenient to describe the $n$-dimensional sphere $\mathbb S^n$ as the unit sphere in $\RR^{n+1}$:
\[
\mathbb S^n:=\{x\in\RR^{n+1}: |x|=1\}\,.
\]
As usual, we say that a vector field $Y$ on $\mathbb S^n$ is polynomial if there exists a polynomial vector field $X$ on $\RR^{n+1}$ that is tangent to $\mathbb S^n$ and $X|_{\mathbb S^n}=Y$.

Taking as basis the Turing complete polynomial vector field we constructed in Section~\ref{sec:polR}, we now show that taking a suitable reparametrization of the field and the stereographic projection, it leads to a Turing complete polynomial vector field on $\mathbb S^n$.

\begin{theorem}\label{thm:sphereTC}
There exists a (constructible) polynomial vector field $Y$ of degree $58$ on the sphere $\SS^n$, $n\geq 17$, which is Turing complete.
\end{theorem}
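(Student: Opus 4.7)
The idea is to transport the Turing complete polynomial vector field $P$ on $\RR^{17}$ obtained in Corollary~\ref{corTC2} to $\SS^n$ via inverse stereographic projection, and then clear denominators by multiplying by a suitable power of $1-x_{n+1}$. Since multiplying a vector field by a positive function merely reparametrizes its orbits and leaves their images unchanged, the resulting polynomial field on $\SS^n$ will inherit the Turing completeness of $P$, and the degree can be controlled by an elementary count.

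Concretely, if $n>17$ we first extend $P$ to a polynomial vector field $\tilde P$ on $\RR^n$ of the same degree $56$ by setting its last $n-17$ components to zero, so that the extra coordinates become first integrals of the flow. Let $\sigma\colon \SS^n\setminus\{N\}\to \RR^n$ be stereographic projection from the north pole $N=(0,\dots,0,1)$, with inverse
\[
\sigma^{-1}(y)=\Bigl(\tfrac{2y_1}{|y|^2+1},\dots,\tfrac{2y_n}{|y|^2+1},\tfrac{|y|^2-1}{|y|^2+1}\Bigr).
\]
A direct calculation using the identities $|y|^2+1=2/(1-x_{n+1})$ and $y_i=x_i/(1-x_{n+1})$ shows that, in the ambient coordinates $x\in\RR^{n+1}$ of $\SS^n$, each partial derivative $\partial\sigma^{-1}_j/\partial y_i$ becomes a polynomial in $x$ of degree exactly $2$ (for instance $\delta_{ij}(1-x_{n+1})-x_ix_j$ for $j\leq n$ and $x_i(1-x_{n+1})$ for $j=n+1$), whereas $\tilde P_i(\sigma(x))$ is a polynomial in $x$ divided by $(1-x_{n+1})^{\deg \tilde P_i}$. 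Multiplying the whole pushforward by $(1-x_{n+1})^{56}$ therefore clears every denominator, and
\[
Y:=(1-x_{n+1})^{56}\cdot \sigma^{-1}_{*}\tilde P
\]
extends to a polynomial vector field on $\RR^{n+1}$ that is tangent to $\SS^n$ (since $\sigma^{-1}$ lands in the sphere), vanishes at $N$, and has degree at most $56+2=58$.

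On $\SS^n\setminus\{N\}$ the factor $(1-x_{n+1})^{56}$ is strictly positive, so $Y$ and $\sigma^{-1}_*\tilde P$ share the same oriented orbits, and these are precisely the $\sigma^{-1}$-images of the orbits of $\tilde P$ in $\RR^n$. Given a Turing machine $T$, input $x_0$, and substring $t^*$, we take $Z_0=(x_0,y_0,0)\in\RR^n$ (padding with zeros in the extra coordinates, which are frozen by $\tilde P$) and $U'_{t^*}:=U_{t^*}\times W\subset\RR^n$, where $W$ is a small open neighborhood of the origin in $\RR^{n-17}$. Then $p:=\sigma^{-1}(Z_0)$ and $V_{t^*}:=\sigma^{-1}(U'_{t^*})\subset \SS^n\setminus\{N\}$ witness Turing completeness: the orbit of $Y$ through $p$ meets $V_{t^*}$ iff the $\tilde P$-orbit through $Z_0$ meets $U'_{t^*}$, iff $T$ halts on $x_0$ with output $t^*$. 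The only nontrivial step is the polynomial extension together with the sharp degree bound, and it hinges on the observation that every denominator produced by pushing forward via $\sigma^{-1}$ is a power of the single polynomial $1-x_{n+1}$ with exponent at most $\deg P$, so one scalar multiplication suffices to make the vector field polynomial globally on $\RR^{n+1}$.
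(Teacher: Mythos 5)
Your proposal is correct and follows essentially the same route as the paper: push forward the degree-$56$ Turing complete polynomial field on $\RR^n$ via inverse stereographic projection and clear the resulting rational denominators, noting that the clearing factor is positive on the complement of the pole and hence only reparametrizes orbits. The paper performs the reparametrization on $\RR^n$ first (multiplying $P$ by $2^d/(1+r^2)^d$) and then pushes forward, while you push forward first and multiply by $(1-x_{n+1})^{56}$ afterwards; under the identity $1-x_{n+1}=2/(|y|^2+1)$ these are the same operation, and your explicit zero-padding for $n>17$ is a minor clarification already implicit in the paper's statement.
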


\begin{proof}
Endowing $\RR^n$ with Cartesian coordinates $(x_1,...,x_n)$, the (inverse) stereographic projection $\varphi:\RR^n\to \mathbb \SS^n$ is defined as follows:
\[
y_0=\frac{r^2-1}{1+r^2}\,,\qquad y_k=\frac{2x_k}{1+r^2}\,,
\]
where $r^2:=x_1^2+\cdots+x_n^2$, and $y_0,...,y_n$ are coordinates in $\mathbb{R}^{n+1}$. It is immediate to check that $\mathbb S^n=\{y_0^2+y_1^2+\cdots+y_n^2=1\}\subset \mathbb{R}^{n+1}$.

Let $P$ be the (constructible) Turing complete polynomial vector field in $\mathbb{R}^n$ whose existence is established by Corollary~\ref{corTC2}, and denote by $d$ its degree. It is of the form
\[
P=\sum_{i=1}^n F_i \pp{}{x_i},
\]
where each $F_i\equiv F_i(x)$ is a polynomial of degree $d$ in the variables $x_1,...,x_n$. Let us compute $\varphi_*P$. By the chain rule we have:

\begin{align*}
\varphi_*\Big(F_i\pp{}{x_i}\Big)&=F_i\cdot\sum_{j=0}^n \pp{y_j}{x_i} \pp{}{y_j}\\
&=F_i\cdot \big[ (1-y_0)y_i\pp{}{y_0} +  (1-y_0-y_i^2)\pp{}{y_i} - \sum_{j\not\in \{0,i\}}  y_iy_j \pp{}{y_j}\big]\,,
\end{align*}
where $F_i$ is evaluated at $\Big(\frac{y_1}{1-y_0},...,\frac{y_n}{1-y_0}\Big)$. In particular, we deduce that
\begin{equation}\label{eq:ste}
\varphi_*P=\sum_{i=1}^n F_i.\big[ (1-y_0)y_i\pp{}{y_0} +  (1-y_0-y_i^2)\pp{}{y_i} - \sum_{j\neq \{0,i\}}  y_iy_j \pp{}{y_j}\big]\,.
\end{equation}
The vector field $\varphi_*P$ is then a rational field defined on $\RR^{n+1}$ except along the plane $\{y_0=1\}$. It is also easy to check that it is tangent to $\mathbb S^n$.

To define a global polynomial vector field on $\mathbb{R}^{n+1}$ tangent to $\mathbb S^n$, we will use a simple trick. Consider the vector field
\begin{equation}\label{eq:repa}
\widetilde P:=\frac{2^d}{(1+r^2)^d}P
\end{equation}
in $\mathbb{R}^n$. Notice that the integral curves of $\widetilde P$ and $P$ are the same, up to a reparametrization. In particular, since the proportionality factor is autonomous and positive, it is clear that $\tilde P$ is Turing complete if and only if $P$ is Turing complete. Indeed, the point $p$ and the open set $U$ associated to each Turing machine $T$, input and output are the same for $P$ and $\widetilde P$, the only difference being the time spent traveling the trajectories.

Accordingly, the (inverse) stereographic projection yields a vector field
\[
X:=\varphi_*\widetilde P=(1-y_0)^d\varphi_*P\,,
\]
whose expression in coordinates using Equation~\eqref{eq:ste} defines a polynomial vector field in $\mathbb{R}^{n+1}$ of degree $d+2$ that is tangent to the sphere $\mathbb S^n$. Therefore, the vector field
\[
Y:=X|_{\mathbb S^n}
\]
is a polynomial vector field on $\mathbb S^n$ by definition. Notice that the north pole $(1,0,\dots,0)$ of $\mathbb S^n$ is a zero of $Y$, so a trivial invariant set.

We claim that the vector field $Y$ on $\SS^n$ is also Turing complete. Indeed, let $T$ be a universal Turing machine, $t$ an input tape and $t^*=(t_{-k}^*,...,t_k^*)$ a finite string of the output. Since $\widetilde P$ in $\mathbb{R}^n$ is Turing complete, there is a constructible point $p\in \mathbb{R}^n$ and a constructible open set $U_{t^*}\subset \mathbb{R}^n$ such that $T$ halts with input $t$ and output $t^*$ if and only if the orbit of $\widetilde P$ through $p$ intersects $U_{t^*}$. Using that the (inverse) stereographic projection $\varphi$ is a diffeomorphism of $\RR^n$ onto $\mathbb S^n\backslash\{(1,0,\dots,0)\}$, and that the point $(1,0,\dots,0)$ is a zero point of $X$, this happens if and only if the orbit of $X$ through $\varphi(p)\in \SS^n$ intersects the open set $\varphi(U_{t^*})$. The open sets $U_{t^*}$ and $\varphi(U_{t^*})$ are schematically depicted in Figure \ref{fig:opensets}. Both $\varphi(p)$ and $\varphi(U_{t^*})$ are clearly constructible, since $U_{t^*},p$ and $\varphi$ are explicit.

\begin{figure}[!h]
\begin{center}
\begin{tikzpicture}
     \node[anchor=south west,inner sep=0] at (0,0) {\includegraphics[scale=0.16]{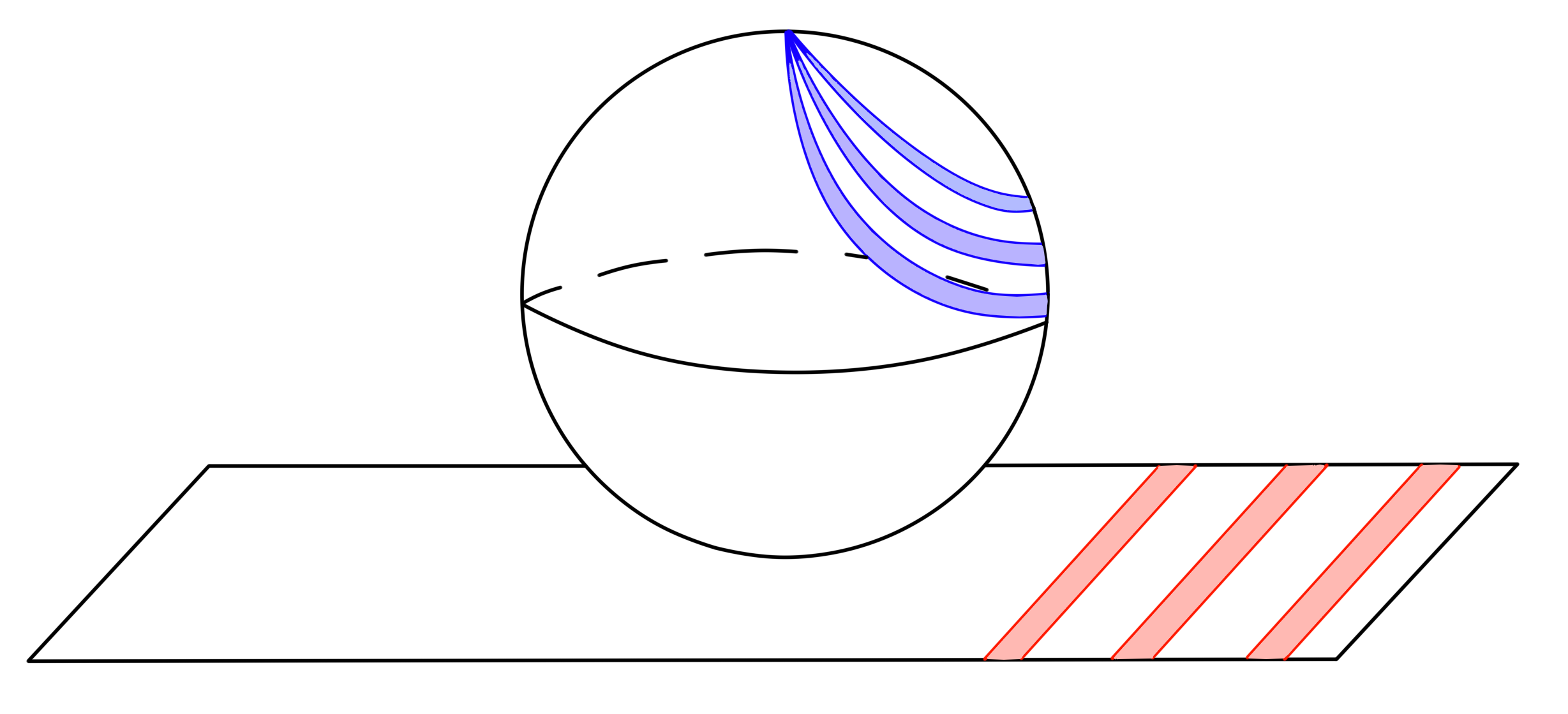}};

     \node[red] at (9,2) {$U_{t^*}$};
     \node[blue] at (7,4.5) {$\varphi(U_{t^*})$};
     \draw[red,->] (10,1)--(10.5,1);

     \node at (1.5,1) {$\mathbb{R}^n$};
     \node at (3.5,4.5) {$\mathbb{S}^n$};
\end{tikzpicture}
\caption{Open sets lifted to the sphere}
\label{fig:opensets}
\end{center}
\end{figure}

The theorem then follows from Remark~\ref{rem:dim}, which allows us to take dimension $n=17$ and a polynomial vector field $P$ of degree $56$, which leads to a polynomial field $X$ of degree $58$ after the (inverse) stereographic projection.
\end{proof}

\section{Proof of the main theorem}\label{S:proof}

We are now ready to prove Theorem~\ref{T.main}. First, by analogy with Definition~\ref{TC}, let us introduce the concept of Turing completeness of the Euler equations (on a Riemannian manifold $(M,g)$) as an infinite dimensional dynamical system.

\begin{definition}
The Euler equations on $(M,g)$ are Turing complete if the following property is satisfied. For any integer $k\geq 0$, given a Turing machine $T$, an input tape $t$, and a finite string $(t_{-k}^*,...,t_k^*)$ of symbols of the alphabet, there exist an explicitly constructible vector field
$X_0\in \mathfrak X^\infty_{vol}(M)$ and a constructible open set $U\subset \mathfrak X^\infty_{vol}(M)$ such that the solution to the Euler equations with initial datum $X_0$ is smooth for all time and intersects $U$ if and only if $T$ halts with an output tape whose positions $-k,...,k$ correspond to the symbols $t_{-k}^*,...,t_k^*$.
\end{definition}

Key to the proof of Theorem~\ref{T.main} is a remarkable embedding theorem established in~\cite{TL} which allows one to embed a generic finite dimensional dynamics into the (infinite dimensional) Euler flow on some compact manifold. More precisely, following~\cite{TL}, given a smooth vector field $Y$ on a compact manifold $N$, we say that $(N,Y)$ can be embedded into the Euler equations for a compact Riemannian manifold $(M,g)$ if there exists a (constructible) injective smooth immersion $\Phi:N\to \mathfrak X^\infty_{vol}(M)$ which maps the integral curves of $Y$ to solutions of the Euler equations on the invariant submanifold $\Phi(N)$. Specifically, for any integral curve $\phi_t:\RR\to N$ defined by the vector field $Y$, the path in $\mathfrak X^\infty_{vol}(M)$ defined by $X_t:=\Phi\circ \phi_t: \RR\to \mathfrak X^\infty_{vol}(M)$ is a smooth solution to the Euler equations on $(M,g)$ for some pressure $P:\RR \to C^\infty(M)$. For our purposes, it is enough to invoke the following result:

\begin{theorem}[Theorem 1.3 in~\cite{TL}]\label{thm:Eulemb}
Let $Y$ be a polynomial vector field on $\mathbb{S}^n$. Then there is a (constructible) compact Riemannian manifold $(M,g)$ such that $(\mathbb S^n,Y)$ can be embedded into the Euler equations for $(M,g)$.
\end{theorem}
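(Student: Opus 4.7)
My plan is to reconstruct Torres de Lizaur's strategy, which exploits Arnold's formulation of the Euler equations on a Lie group as a quadratic ODE on the Lie algebra. The goal is to engineer a compact manifold $(M,g)$ in which the finite-dimensional polynomial dynamics of $Y$ on $\mathbb{S}^n$ appears as an invariant finite-dimensional subsystem of the infinite-dimensional Euler flow on $\mathfrak{X}^\infty_{vol}(M)$.

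First, I would \emph{quadratize} the polynomial vector field. Viewing $Y$ as a polynomial vector field in $\mathbb{R}^{n+1}$ tangent to $\mathbb{S}^n$, I would introduce auxiliary coordinates for every monomial up to the degree $d$ of $Y$, producing an equivalent quadratic ODE $\dot{z}=Q(z,z)$ on some $\mathbb{R}^N$, with $\mathbb{S}^n$ embedded as an invariant algebraic subvariety on which $Q$ restricts to $Y$. This is a standard device and buys us a system of exactly the algebraic shape produced by Arnold's formalism, namely $\dot u = B(u,u)$ with $B$ a bilinear map.

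Second, I would realize this quadratic system as a finite-dimensional invariant subsystem of the Euler equations on $M = G \times \mathbb{T}^N$, with $G$ a compact Lie group (the candidate being $SO(N)$ in line with the remark after Theorem~\ref{T.main}) and with a left-invariant product-type metric $g$. The torus factor plays a crucial role: the $\mathbb{T}^N$-translation action on $M$ preserves the metric, so each Fourier mode of a divergence-free vector field evolves into the same Fourier sector under the Euler flow. By restricting attention to a carefully chosen finite collection of Fourier modes (multiplied by a fixed family of vector fields on $G$), one obtains a finite-dimensional $\Phi(N)$-invariant affine subspace $V \subset \mathfrak{X}^\infty_{vol}(M)$ on which the Euler flow is governed by an autonomous quadratic ODE, again by Arnold's $\dot u = B(u,u)$ formula together with the bilinear structure of the nonlinearity $\nabla_X X$.

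Third, and this is the main obstacle, I need to \emph{match the bilinear form}: choose the metric $g$ (its "inertia operator" in the Arnold picture) and the finite collection of Fourier modes / vector fields on $G$ so that the induced quadratic structure on $V$ coincides, via a linear isomorphism $V \cong \mathbb{R}^N$, with the target $Q(z,z)$ from Step~1. This reduces to a solvability problem for a large linear system whose unknowns are the metric coefficients and the coefficients of the chosen modes. The reason $N$ must be taken large (explaining the $\dim(M)\lesssim 10^{35}$ bound) is precisely a dimension count: each additional torus direction and each additional copy of $SO(N)$ provides new Fourier sectors and new metric parameters, and one must ensure enough algebraic freedom to realize arbitrary quadratic $Q$. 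Standard transversality or direct construction arguments on left-invariant metrics on compact semisimple groups, combined with genericity of the torus frequencies to ensure non-resonance between different Fourier blocks, complete this step.

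Finally, I would define $\Phi: \mathbb{S}^n \to \mathfrak{X}^\infty_{vol}(M)$ by composing the inclusion $\mathbb{S}^n \hookrightarrow \mathbb{R}^N$ from Step~1 with the linear embedding $\mathbb{R}^N \cong V \hookrightarrow \mathfrak{X}^\infty_{vol}(M)$ from Steps~2--3. By construction $\Phi$ is a smooth injective immersion, $\Phi(\mathbb{S}^n)$ is invariant under the Euler flow, and $\Phi$ intertwines $Y$ with the restricted Euler dynamics, producing a pressure $P(t)$ as the Hodge-theoretic primitive of the exact part of $\nabla_{X_t}X_t$ on $M$. The constructibility of $(M,g)$ and $\Phi$ follows from the fact that every choice in Steps~1--3 is explicit, given a polynomial $Y$.
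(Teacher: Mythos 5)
Before comparing, note that the paper itself offers no proof of this statement: it is imported verbatim as Theorem~1.3 of~\cite{TL} and used as a black box, so the only argument to measure your sketch against is Torres de Lizaur's, which in turn rests on Tao's universality theorem~\cite{T2} for quadratic ODEs. Your outline does follow that general line (reduce the polynomial field to a quadratic system, then realize that quadratic system as a finite-dimensional invariant subsystem of the Euler flow on a manifold of the form $SO(N)\times\mathbb{T}^N$ with a tailor-made metric), but it misses the single constraint around which the whole construction is organized.

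The gap is the energy (cancellation) condition. The Euler flow conserves the kinetic energy $\int_M g(X,X)\,\mu$, so any subsystem obtained through a \emph{linear} embedding $\mathbb{R}^N\cong V\subset\mathfrak X^\infty_{vol}(M)$ automatically carries a conserved positive-definite quadratic form; hence only quadratic ODEs $\dot u=B(u,u)$ with $\langle B(u,u),u\rangle=0$ (after a linear change of variables) can arise this way, and this is precisely the hypothesis of Tao's theorem that your Steps~2--3 implicitly invoke. Your Step~1 --- one auxiliary coordinate per monomial --- produces a quadratic system with no reason whatsoever to satisfy this condition, and your Step~3 --- matching an \emph{arbitrary} quadratic $Q$ by tuning metric coefficients plus a transversality/genericity argument --- is then attempting something that the energy obstruction makes impossible for generic $Q$. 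The construction in~\cite{TL} deals with this by choosing a very particular quadratization: $\mathbb{S}^n$ is embedded in $\mathbb{R}^N$ via an orthonormal basis of spherical harmonics of degrees $0,\dots,d+1$ (this is exactly where the formula $N(n,d)=\sum_{j=0}^{d+1}\binom{n-1+j}{j}\cdot\frac{2j+n-1}{j+n-1}$ quoted in the remark comes from), which keeps the image inside a round sphere and allows the pushed-forward dynamics to be extended to a quadratic field obeying the cancellation property; only then is Tao's embedding applied, and that embedding is a completely explicit design of the metric on $SO(N)\times\mathbb{T}^N$ (not a product left-invariant metric chosen by genericity, and not a Fourier-block/non-resonance argument --- indeed the Euler nonlinearity mixes Fourier sectors, so your claim that each mode evolves within its own sector is not correct). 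Without the cancellation condition and the specific spherical-harmonics quadratization, the proposed argument fails at its central step.
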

\begin{remark}
The manifold $M$ is of the form $SO(N)\times \mathbb T^{N}$, and the metric $g$ has a cumbersome expression but it is constructible. If the polynomial vector $Y$ on $\mathbb{S}^n$ has degree $d$, the integer $N\equiv N(n,d)$ can be computed as (provided that $n\geq 2$)
\[
N(n,d)=\sum_{j=0}^{d+1} \binom{n-1+j}{j}\cdot \frac{2j+n-1}{j+n-1}\,,
\]
see Section~2.3 in~\cite{TL}.
\end{remark}

\begin{proof}[Proof of Theorem~\ref{T.main}]

By Theorem~\ref{thm:sphereTC}, there is a (constructible) Turing complete polynomial vector field $Y$ on $\mathbb{S}^{17}$ of degree $58$. Denote the associated flow by $\phi_t$. Applying the embedding Theorem~\ref{thm:Eulemb}, we can construct (explicitly) a compact Riemannian manifold $(M,g)$ and an embedding $\Phi:\mathbb S^{17}\to \mathfrak X^\infty_{vol}(M)$ such that $\Phi(\mathbb S^{17})$ is an invariant submanifold, and $X_t:=\Phi\circ \phi_t$ is the unique (smooth) solution to the Euler equations with initial datum $X_0=\Phi\circ \phi_0$.

Now, in view of Theorem~\ref{thm:sphereTC}, given a Turing machine $T$, an input tape $t$ and some output $t^*$, there is a point $p \in \mathbb{S}^{17}$ and an open set $U_{t^*}\subset \mathbb{S}^{17}$ such that the orbit of $Y$ through $p$ intersects $U_{t^*}$ if and only if $T$ halts with input $t$ and output $t^*$. By construction of the embedding $\Phi$, this is satisfied if and only if the solution to the Euler equations with initial datum $X_0=\Phi(p)$ intersects the set $\Phi(U_{t^*})\subset \mathfrak{X}^{\infty}(M)$. Since $\Phi(\mathbb{S}^{17})$ is invariant by the Euler flow, this then happens if and only if the (unique) smooth solution to the Euler equations with initial datum $X_0$ intersects an open neighborhood $V_{t^*}\subset \mathfrak{X}^{\infty}(M)$ of $U_{t^*}$ (in the $C^\infty$ topology), such that $V_{t^*}\cap \Phi(\mathbb{S}^n)=U_{t^*}$. Here we have used that the embedding $\Phi$ constructed in~\cite{TL} is $C^\infty$, and hence, for any $q\in \mathbb S^{17}$, the intersection of an open neighborhood of the point $\Phi(q)\in \mathfrak{X}^{\infty}(M)$ with $\Phi(\mathbb{S}^{17})$ is diffeomorphic to an open neighborhood of $q$ in $\mathbb S^{17}$. This completes the proof of the Turing universality of the Euler equations for $(M,g)$.
\end{proof}

\section{Final remark: Moore's conjecture}\label{S:Moore}

An unexpected spin-off of our construction of a Turing complete polynomial vector field on $\mathbb S^{17}$, cf. Theorem~\ref{thm:sphereTC}, is that it allows us to disprove a conjecture stated by Moore. Specifically, in~\cite{Mo} Moore conjectured that a universal Turing machine cannot be simulated by an analytic function on a compact space (with reasonable input and output encodings).

The main idea to prove Theorem~\ref{T.Moore} is to show that the $\delta$-time flow map (which is analytic) of the reparametrized vector field $\widetilde P$ in Equation~\eqref{eq:repa} is Turing complete for some $\delta>0$. Then, using the stereographic projection, we will obtain the desired Turing complete analytic diffeomorphism on $\mathbb S^{17}$. In the proof, we use the same notation introduced in Section~\ref{sec:polR} without further mention.

\begin{proof}[Proof of Theorem~\ref{T.Moore}]

Given a universal Turing machine $T$, let us consider the polynomial vector field $P$ in $\mathbb{R}^{m+4}$ that we constructed in Corollary~\ref{corTC2}. Its associated ODE is
\begin{equation*}
\frac{dZ}{d\tau}=(1,\widetilde p_T(\omega,z))\,,
\end{equation*}
where the polynomial $\widetilde p_T$ appears in Theorem~\ref{thm:port} and $Z=(\omega,z)$. Now we change its parametrization to define
$\widetilde P:= \frac{1}{(r^2+1)^d} P\,,$
where $r^2=\omega^2+|z|^2$. We denote by $\phi_{\tau}$ the flow of $\widetilde P$ (which is global because $\widetilde P$ is a bounded vector field). Since $\widetilde P$ is an analytic field, it is well known that its flow is analytic as well. Fixing a constant $\delta\in(0,\frac{1}{2})$, we claim that the $\delta$-time flow $F:=\phi_\delta$ of $\widetilde P$ is a Turing complete diffeomorphism of $\RR^{m+4}$.

For a given input $x_0$ of the Turing machine $T$, we will use the same initial point that we constructed in Proposition~\ref{corTC}. The initial point is then $p=(0,x_0,\widetilde y_0)$, and we take the open set $U_{t^*}= V \times U_\varepsilon^{t^*}\times \mathbb{R}^{m} \subset \RR^{m+4}$ where $V=\bigcup_{i\in \mathbb{N}} (i,i+1/2)$ and $U_\varepsilon^{t^*}\subset\RR^3$ is an $\varepsilon$-neighborhood of the set of (finitely many) points in $\RR^3$ associated to a configuration of $T$ of the form $(q_{halt},\overline t)$ with a tape $\overline t$  that has the symbols $t^*_{-k},...,t^*_k$ in positions $-k,...,k$.

First, assume that $T$ halts with input $x_0$ and output $t^*$ at step $j$. By construction, then $\Delta^r(x_0)=\Delta^{j}(x_0)=(y_1,y_2,q_{halt})$ for all integers $r\geq j$, and the tape associated to $(y_1,y_2)$ coincides with $t^*$ in the positions $-k,...,k$. The ODE associated to $\widetilde P$ is
\begin{equation*}
\frac{d\widetilde Z}{d\tau}=(f,f\cdot\widetilde p_T(\widetilde\omega,\widetilde z))\,,
\end{equation*}
where $f:=(1+\widetilde\omega^2+|\widetilde z|^2)^{-d}$. It is easy to check that the solution $\widetilde Z(\tau)=(\widetilde \omega(\tau),\widetilde z_1(\tau), \widetilde z_2(\tau))$ of this ODE with initial condition $p$ satisfies
\begin{equation}\label{eq:rep}
\widetilde Z(\tau)=Z(\widetilde \omega(\tau))\,,
\end{equation}
where $Z(\tau)=(\omega(\tau),z_1(\tau),z_2(\tau))$ denotes the solution to the ODE associated to $P$ and same initial condition.

The properties of $z_1(\tau)$ ensure that
$$ |z_1(\tau)-\Delta^i(x_0)|<\varepsilon $$
for $\tau \in [i,i+\frac{1}{2}]$ and $i\in\mathbb N$. By Equation~\eqref{eq:rep} this implies that
\begin{equation}\label{eq:repTC}
|\widetilde z_1(\tau)- \Delta^i(x_0)|<\varepsilon
\end{equation}
for $\widetilde\omega(\tau)\in [i,i+\frac{1}{2}]$. On the other hand, since the reparametrization factor $f$ is strictly smaller than $1$ (except at the origin), we infer that $\tilde \omega(\tau) \in [i,i+\frac{1}{2}]$ if $\tau \in [t_i,t_i+A_i]$ for some $t_i>i$ and $A_i>\frac{1}{2}$. Since the machine halts with final configuration $\Delta^j(x_0)$, we deduce that for $\tau \in [t_j,t_j+A_j]$ the solution $\widetilde Z(\tau)$ satisfies
$$  |\widetilde z_1(\tau)- \Delta^j (x_0)|<\varepsilon\,. $$
Being the interval $[t_j,t_j+A_j]$ of size greater than $1/2$, there is a natural number $r\in \mathbb{N}$ such that $r\delta \in (t_j,t_j+A_j)$. Then $F^r(p)=\phi_{\delta}^r(p)=\phi_{r\delta}(p)=:(\omega_r, z_1^r,z_2^r)$ satisfies that $z_1^r=\widetilde z_1(r\delta)$ is in the $\varepsilon$-neighborhood of $\Delta^j(x_0)$. Moreover, we also conclude that $\widetilde\omega(r\delta)=\omega_r \in (j,j+1/2)$ because $r\delta \in (t_j,t_j+A_j)$, thus implying that $F^r(p)\in U_{t^*}$ as claimed.

To check the converse implication, assume that there is a natural number $r\in \mathbb{N}$ such that $F^r(p)=:(\omega_r,z_1^r,z_2^r)\in U_{t^*}$. This is equivalent, by definition, to the assumption that the solution $\widetilde Z(\tau)=(\widetilde \omega(\tau),\widetilde z_1(\tau),\widetilde z_2(\tau))$ satisfies $\tilde Z(r\delta)\in U_{t^*}$. In particular $\widetilde \omega(\tau)\in (j,j+1/2)$ for some $j\in \mathbb{N}$. Equation~\eqref{eq:repTC} implies that
$$ |\widetilde z_1(r\delta)- \Delta^j(x_0)|<\varepsilon\,,$$
where $\widetilde z_1(r\delta)=:z_1^r$. Moreover, by assumption, $|z_1^r -(y_1,y_2,q_{halt})|<\varepsilon$ for some configuration $(y_1,y_2,q_{halt})$ whose associated tape coincides with $t^*$ in the position $-k,...,k$. We then deduce that $|\Delta^j(x_0)-(y_1,y_2,q_{halt})|<2\varepsilon$, and therefore, since any two points representing a configuration of the machine are at distance at least $1$, this yields $\Delta^j(x_0)=(y_1,y_2,q_{halt})$. Accordingly, the machine $T$ halts with final configuration $(y_1,y_2,q_{halt})$, and hence with output $t^*$.

Summarizing, we have established that the $\delta$-time flow of the vector field $\widetilde P$ is a Turing complete diffeomorphism of $\RR^n$, $n:=m+4$. As shown in Section~\ref{S:sphere}, $\widetilde P$ can be lifted to a polynomial vector field $Y$ on the sphere $\mathbb{S}^{n}$ via the stereographic projection, and the north pole $N_0$ is a zero point of $Y$. Denoting the (inverse) stereographic projection as $\varphi:\mathbb{R}^n\rightarrow \mathbb S^n$, and by $\rho_t$ the flow of $Y$, we infer that $\rho_t(N_0)=N_0$ for all $t\in\RR$ ($N_0$ is a fixed point), and in the complement $\mathbb S^n\backslash\{N_0\}$, $\rho_t$ is conjugate to the flow $\phi_t$ defined by $\widetilde P$, i.e.,
$$\rho_t= \varphi \circ \phi_t \circ \varphi^{-1}\,, $$
for all $t\in\RR$. It is then easy to check that the map $\Pi:=\rho_{\delta}:\mathbb S^n\to \mathbb S^n$ is a Turing complete analytic diffeomorphism on $\mathbb{S}^n$. Indeed, as shown above, for a given input $x_0$ of the machine and an output $t^*=(t_{-k}^*,...,t_k^*)$, there is a point $p\in\RR^n$ and open set $U_{t^*}\subset\RR^n$, such that the iterates of $\phi_{\delta}$ through $p$ reach $U_{t^*}$ if and only if the machine $T$ halts with the aforementioned output. Therefore, after the stereographic projection, this property is satisfied if and only if the iterates of $\rho_{\delta}$ through the point $\varphi(p)$ reach the open set $\varphi(U_{t^*})\subset \mathbb S^n\backslash\{N_0\}$. We conclude that the diffeomorphism $\rho_{\delta}$, which is analytic because $Y$ is an analytic field, is Turing complete. Finally, as in Theorem~\ref{thm:sphereTC}, we can take the dimension $n=17$, which completes the proof of the theorem.
\end{proof}

\bibliographystyle{amsplain}

\end{document}